\documentclass{amsart}
\usepackage{amsmath}
\usepackage{amssymb}
\usepackage{color}
\input xy
\xyoption{all}
\newtheorem{Lemma}{Lemma}[section]
\newtheorem{remark}[Lemma]{Remark}

\newtheorem{theorem}[Lemma]{Theorem}
\newtheorem{lemma}[Lemma]{Lemma}
\newtheorem{proposition}[Lemma]{Proposition}
\newtheorem{corollary}[Lemma]{Corollary}

\newtheorem{example}[Lemma]{Example}

\newenvironment{Proof}{{\sc Proof.}\ }{~$\square$\vspace{0.2truecm}}

\newcommand{\End}{\operatorname{End}}

\newcommand{\cmat}{\left(\begin{array}}
\newcommand{\fmat}{\end{array}\right)}

   \begin{document}
   \title{$V$-rings versus $\Sigma$-$V$ Rings}
 \author{Bijan Davvaz}
 \address{Department of Mathematics, Yazd University, Yazd, Iran}
 \email{davvaz@yazd.ac.ir}
 \author{Zahra Nazemian}
 \address{Department of Mathematics, Yazd University, Yazd, Iran}
 \email{z$_{-}$nazemian@yahoo.com}
  \author{Ashish K. Srivastava}
\address{Department of Mathematics and Computer Science, St. Louis University, St. Louis, MO-63103, USA}
 \email{asrivas3@slu.edu}
   \keywords{von Neumann regular rings, exchange rings, $V$-rings, $\Sigma$-$V$ rings \\ \protect \indent 2010 {\it Mathematics Subject Classification.} Primary 16P99, Secondary 13E99.}
      \begin{abstract} 
      This paper studies similarities and differences between the classes of rings over which each simple module is injective and rings over which each simple module is $\Sigma$-injective. The rings in the former class are called $V$-rings and the rings in the latter class are called $\Sigma$-$V$ rings. We have obtained analogues of various well-known results about $V$-rings for $\Sigma$-$V$ rings. Motivated by a conjecture of Kaplansky, Fisher asked if a prime right $V$-ring is right primitive. Although a counter-example to Kaplansky's conjecture was constructed long ago but Fisher's question is still open. In this paper we show that for a right $\Sigma$-$V$ ring, the notions of prime and primitive are equivalent. Also, we show that an exchange $\Sigma$-$V$ ring is left-right symmetric and moreover, it is von Neumann regular.  
       \end{abstract}
      
    \maketitle

\section{Introduction}

\noindent A ring $R$ is called a right $V$-ring if each simple right $R$-module is injective \cite{Vil}. These rings were first studied by Villamayor and so in his honor these are called $V$-rings. It is a well-known result due to Kaplansky that a commutative ring is von Neumann regular if and only if it is a $V$-ring. However, in the case of noncommutative setting, the classes of von Neumann regular rings and $V$-rings are quite independent. A generalization of the notion of $V$-rings was introduced in \cite{GV} and studied subsequently in \cite{Baccellaproc} and \cite{Ashish} where a ring $R$ is called a right $\Sigma$-$V$ ring if each simple right $R$-module is $\Sigma$-injective. Recall that a module $M$ is called $\Sigma$-injective if $M^{(\alpha)}$ is injective for any cardinal $\alpha$. Clearly, a $\Sigma$-$V$ ring is a $V$-ring, however, there are examples of $V$-rings that are not $\Sigma$-$V$ rings. In \cite{Ashish} it is proved that a right $\Sigma$-$V$ ring is directly-finite and an example is given of a right $V$-ring that is not directly-finite. Baccella \cite{Baccellaproc} had shown that a von Neumann regular ring $R$ is a right $\Sigma$-$V$ ring if and only if $R/P$ is simple artinian for each prime ideal $P$ of $R$ and consequently, it follows that for a von Neumann regular ring the notion of $\Sigma$-$V$ ring is left-right symmetric. Examples are known of von Neumann regular right $V$-rings that are not left $V$-rings.       

In this paper, we continue investigation of right $\Sigma$-$V$ rings and explore further similarities and differences between the classes of $V$-rings and $\Sigma$-$V$ rings. Among other things we prove that for a $\Sigma$-$V$ ring, the notions of prime and primitive are equivalent thus giving a partial answer to a question of Fisher \cite{Fisher} which asks if a prime right $V$-ring is right primitive. We show that for class of exchange rings, the notion of $\Sigma$-$V$ rings is left-right symmetric. We also show that an exchange right $\Sigma$-$V$ ring has bounded index of nilpotence which gives a positive answer to \cite[Problem 15]{Ashish}.

Throughout this paper all rings are associative with identity element and modules are unital right modules unless stated otherwise. For any term not defined here, the reader is referred to \cite{Anderson}, \cite{Lam1} and \cite{lam2}.    

\bigskip

\section{Some characterizations of $\Sigma$-$V$ rings}

\noindent We begin with some useful equivalent characterizations for a ring to be a right $\Sigma$-$V$ ring. It is known that a ring $R$ is a right $V$-ring if and only if  each right ideal $I$ of $R$ is an idempotent, that is, $I^2=I$ and every primitive factor ring of $R$ is a right $V$-ring. We have the following analogue for right $\Sigma$-$V$ rings.

\begin{lemma} \label{right primitive factor}
For a ring $R$, the following statements are equivalent:
\begin{enumerate}
\item $R$ is a right $\Sigma$-$V$ ring. 
\item Each right ideal of $R$ is an idempotent and each primitive factor ring of $R$ is a right $\Sigma$-$V$ ring.
\end{enumerate}
\end{lemma}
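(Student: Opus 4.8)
The plan is to reduce everything to the characterization of right $V$-rings quoted above, together with Faith's theorem that a module $M_R$ is $\Sigma$-injective if and only if $M$ is injective and $R$ satisfies the ascending chain condition on the right ideals $\ann_R(X) = \{r \in R : Xr = 0\}$, where $X$ ranges over subsets of $M$. The reason for invoking Faith's theorem is to separate the two ingredients of $\Sigma$-injectivity, namely ordinary injectivity and a chain condition on annihilators, because these behave very differently when one passes between $R$ and a primitive factor ring $R/P$.

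Throughout, for a primitive ideal $P$ I write $\bar R = R/P$, and recall that every simple $\bar R$-module is a simple $R$-module annihilated by $P$, while conversely a simple $R$-module $S$ has annihilator a primitive ideal $P$ and is a simple faithful $\bar R$-module. Two elementary observations will carry most of the argument. First, if $M_R$ satisfies $MP = 0$ and is injective over $R$, then $M$ is injective over $\bar R$: the right ideals of $\bar R$ are exactly the $J/P$ with $P \subseteq J$, and given a map $J/P \to M$ I lift it to $J \to M$, extend over $R$ to multiplication by some $m_0 = g(1)$, and note $g(P) = m_0 P \subseteq MP = 0$, so the extension descends to $\bar R$; Baer's criterion over $\bar R$ then applies. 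Second, since $SP = 0$ forces $P \subseteq \ann_R(X)$ for every $X \subseteq S$, the assignment $J \mapsto J/P$ is a lattice isomorphism carrying $\{\ann_R(X) : X \subseteq S\}$ onto $\{\ann_{\bar R}(X) : X \subseteq S\}$; hence $R$ satisfies the relevant ACC if and only if $\bar R$ does.

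For $(1) \Rightarrow (2)$, assume $R$ is a right $\Sigma$-$V$ ring. Then $R$ is in particular a right $V$-ring, so by the quoted characterization every right ideal is idempotent. For a primitive ideal $P$ and a simple $\bar R$-module $S$, each direct sum $S^{(\alpha)}$ is injective over $R$ (as $S$ is $\Sigma$-injective over $R$) and is annihilated by $P$, so by the first observation $S^{(\alpha)}$ is injective over $\bar R$ for every $\alpha$; thus $\bar R$ is a right $\Sigma$-$V$ ring. For $(2) \Rightarrow (1)$, a $\Sigma$-$V$ ring is a $V$-ring, so the hypotheses in (2) say in particular that every right ideal is idempotent and every primitive factor is a right $V$-ring; by the quoted characterization $R$ is a right $V$-ring, and hence every simple right $R$-module $S$ is injective over $R$. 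Writing $P = \ann_R(S)$, the factor $\bar R$ is a right $\Sigma$-$V$ ring, so $S$ is $\Sigma$-injective over $\bar R$, whence by Faith's theorem $\bar R$ satisfies the ACC on $\{\ann_{\bar R}(X) : X \subseteq S\}$; by the second observation $R$ satisfies the ACC on $\{\ann_R(X) : X \subseteq S\}$. Combining this with the injectivity of $S$ over $R$, Faith's theorem yields that $S$ is $\Sigma$-injective over $R$, and as $S$ was arbitrary, $R$ is a right $\Sigma$-$V$ ring.

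The step I expect to be most delicate is the passage of injectivity between $R$ and $\bar R$ in the direction $(2) \Rightarrow (1)$. Injectivity descends from $R$ to $\bar R$ for modules killed by $P$, but it does not ascend from $\bar R$ to $R$ in general, so one cannot read off injectivity of $S$ over $R$ merely from $\bar R$ being a $\Sigma$-$V$ ring; this is exactly why the $V$-ring characterization must be brought in, to supply injectivity over $R$ independently. By contrast, the annihilator chain condition transfers freely in both directions, precisely because all the relevant annihilators contain $P$. Keeping these two ingredients apart via Faith's theorem is the crux of the proof.
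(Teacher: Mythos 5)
Your proof is correct, but it takes a genuinely different route from the paper's. The paper's proof of $(2)\Rightarrow(1)$ does not invoke the quoted $V$-ring characterization at all: it fixes a simple module $S$ with $I=\ann_r(S)$, forms $M=S^{(\mathbb{N})}$ and its injective hull $E(M)$, and uses the idempotence of right ideals \emph{directly} to show $\ann_r(E(M))=I$ (if $xa\neq 0$ with $a\in I$, essentiality gives $0\neq xar\in M$, and then $K=arR\subseteq I$ idempotent forces $xK=xK^2\subseteq MK\subseteq MI=0$, a contradiction); thus $E(M)$ is an $R/I$-module, and $\Sigma$-$V$-ness of $R/I$ makes $M$ injective over $R/I$, whence $M=E(M)$ by essentiality and $S$ is $\Sigma$-injective. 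You instead split $\Sigma$-injectivity via Faith's criterion into injectivity plus ACC on annihilators of subsets, obtaining the injectivity of $S$ over $R$ from the quoted $V$-ring characterization (used in its harder direction, as a black box) and transferring the ACC across the quotient by the observation that all relevant annihilators contain $P$. Your version is more modular and makes transparent exactly which ingredient fails to pass between $R$ and $R/P$ (injectivity) and which passes trivially (the chain condition); the paper's version is more self-contained, in effect re-proving the relevant direction of the $V$-ring characterization in the $\Sigma$-setting by the same idempotent-ideal trick that underlies that characterization. Note also that both arguments ultimately lean on Faith's theorem: you use it explicitly and twice, while the paper uses it implicitly in its final step, where injectivity of the \emph{countable} direct sum $M$ is taken to imply full $\Sigma$-injectivity of $S$.
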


\begin{proof}
$(1) \Rightarrow (2)$. Let $R$ be a right $\Sigma$-$V$ ring. Clearly then every factor ring of $R$ is a right $\Sigma$-$V$ ring. 
Since a right $\Sigma$-$V$ ring is, in particular, a right $V$-ring, every right ideal of $R$ is an idempotent (see, for example \cite [Theorem 6.2] {JAT}).  \\

$(2) \Rightarrow (1)$. Let $S$ be a simple right $R$-module. To prove that $R$ is a right $\Sigma$-$V$ ring, we need to show that $S$ is $\Sigma$-injective. Let $I= ann_r(S)$. Then $R/I$ is a right primitive ring. 
Let $M = \oplus _{i = 1}^{\infty} S_i$ where each $S_i=S$. It is clear that $ann_r(M) = I$. We claim  that $ann_r(E(M)) = I$. Assume to the contrary that 
$I \neq ann_r(E(M))$. Then there is an element $x \in E(M)$ such that $xI \neq 0$. Let $a \in I$ such that $xa \neq 0$, then there exists 
  $r \in R$ such that $xar \in M$ is nonzero.  Set $K = arR$. Since $xK \leq M$ and $K$ is idempotent, we have $xK \leq MK \leq MI = 0$.
  This yields a contradiction. Hence $I = ann_r(E(M))$. So $E(M)$ is an $R/I$-module. Since, by assumption $R/I$ is a right $\Sigma$-$V$ ring, it follows that $E(M) = M$. Hence $S$ is $\Sigma$-injective. This shows that $R$ is a right $\Sigma$-$V$ ring. 
\end{proof}

\noindent Recall that a ring $R$ is called right {\it $q.f.d.$ $($quotient finite-dimensional$)$ relative to a module $M$} if no cyclic right $R$-module contains an infinite direct sum of modules isomorphic to submodules of $M$. In \cite{Ashish}, it is shown that if $R$ is a right $\Sigma$-$V$ ring, then $R$ is q.f.d. relative to every simple right $R$-module. We extend it in the next lemma.

\begin{lemma} \label{qfd}
For a ring $R$, the following statements are equivalent:
\begin{enumerate}
\item $R$ is a right $\Sigma$-$V$ ring. 
\item $R$ is a right $V$-ring and $R$ is q.f.d. relative to every simple right $R$-module.
\end{enumerate}
\end{lemma}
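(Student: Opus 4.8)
The implication $(1)\Rightarrow(2)$ should be immediate and require no real work: a $\Sigma$-injective module is injective, so a right $\Sigma$-$V$ ring is automatically a right $V$-ring, and the assertion that such a ring is q.f.d.\ relative to every simple right module is precisely the result of \cite{Ashish} quoted just before the statement. Thus the entire content lies in $(2)\Rightarrow(1)$, and I would attack it directly rather than through the primitive-factor reduction of Lemma \ref{right primitive factor}. The plan is to fix a simple right $R$-module $S$ (injective, since $R$ is a right $V$-ring) and show that $S^{(\alpha)}$ is injective for every cardinal $\alpha$, which is exactly $\Sigma$-injectivity of $S$. I would verify injectivity of $S^{(\alpha)}$ by Baer's criterion, so the goal becomes: every homomorphism $f\colon I\to S^{(\alpha)}$ from a right ideal $I$ of $R$ has image contained in a finite subsum $\bigoplus_{\lambda\in F}S$.

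The crux is this finite-support statement, and here is where the q.f.d.\ hypothesis enters. For each coordinate $\lambda$ let $\pi_\lambda$ denote the projection of $S^{(\alpha)}$ onto its $\lambda$-th copy of $S$. Since $S$ is injective, each composite $\pi_\lambda f\colon I\to S$ extends to a map $R\to S$, which, via the identification $\Hom(R_R,S)\cong S$, is of the form $r\mapsto s_\lambda r$ for a suitable $s_\lambda\in S$. Hence $f(a)=(s_\lambda a)_\lambda$ for all $a\in I$, and the support of $f(I)$ is the set $\{\lambda : s_\lambda I\neq 0\}$. Suppose for contradiction that this set is infinite, and choose a countable subfamily $\lambda_1,\lambda_2,\dots$ inside it. I then assemble the single element $\sigma=(s_{\lambda_n})_n$ of the product $\prod_n S$ and form the cyclic module $C=\sigma R$. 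The assignment $a\mapsto(s_{\lambda_n}a)_n$ maps $I$ into $C$; because each $f(a)$ has finite support, this image lies inside the direct sum $S^{(\N)}$, so it is a semisimple submodule of $S^{(\N)}$ whose projection onto the $n$-th coordinate equals $s_{\lambda_n}I=S\neq 0$ for every $n$. Such a submodule has infinite length and therefore contains an infinite direct sum of copies of $S$, now exhibited inside the cyclic module $C$ — contradicting that $R$ is q.f.d.\ relative to $S$.

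Consequently the support of $f(I)$ is finite, say $f(I)\subseteq\bigoplus_{\lambda\in F}S$ with $F$ finite. A finite direct sum of injective modules is injective, so $f$, regarded as a map into $\bigoplus_{\lambda\in F}S$, extends to $R$; composing with the inclusion $\bigoplus_{\lambda\in F}S\hookrightarrow S^{(\alpha)}$ gives the required extension of $f$ to all of $R$. By Baer's criterion $S^{(\alpha)}$ is injective, and since $\alpha$ is arbitrary, $S$ is $\Sigma$-injective; as $S$ was an arbitrary simple module, $R$ is a right $\Sigma$-$V$ ring.

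I expect the one genuinely delicate point to be the construction of the single cyclic module $C=\sigma R$ that simultaneously captures infinitely many copies of $S$. The $V$-ring hypothesis is what allows me to replace the a priori uncontrolled map $f$ by the coordinatewise multiplication data $(s_\lambda)_\lambda$, and it is precisely the diagonal element $\sigma$ built from these data that converts an infinite support of $f(I)$ into an infinite direct sum of copies of $S$ lying inside one cyclic module, which is exactly the configuration the q.f.d.\ condition forbids.
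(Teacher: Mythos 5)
Your proposal is correct, but it proves $(2)\Rightarrow(1)$ by a genuinely different route than the paper. The paper's proof works with the injective hull: it sets $M=S^{(\N)}$, notes that $\soc(E(M))=M$ is essential, and shows every cyclic submodule $xR$ of $E(M)$ is semisimple --- its socle is essential, is finitely generated by the q.f.d.\ hypothesis, and (this is where the $V$-ring hypothesis enters) a finitely generated semisimple module over a $V$-ring is injective, hence splits off and must be all of $xR$. Thus $E(M)$ is semisimple, so $E(M)=M$, and the paper then relies (implicitly) on the standard fact that injectivity of the countable direct sum $S^{(\N)}$ already gives $\Sigma$-injectivity of $S$. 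You instead avoid injective hulls and the $\aleph_0$-reduction entirely: you verify Baer's criterion for $S^{(\alpha)}$ directly for every cardinal $\alpha$, using injectivity of $S$ to write any $f\colon I\to S^{(\alpha)}$ coordinatewise as multiplication by elements $s_\lambda$, and then using the diagonal element $\sigma=(s_{\lambda_n})_n$ of the countable product to convert a hypothetical infinite support of $f(I)$ into an infinite direct sum of copies of $S$ inside the cyclic module $\sigma R$, contradicting q.f.d.; your key counting step (a semisimple submodule of $S^{(\N)}$ with all coordinate projections nonzero cannot have finite length, since a finitely generated submodule lives in a finite subsum) is sound. In effect you re-derive, in this special case, Faith's criterion for $\Sigma$-injectivity, which is essentially the same machinery the paper's citation of \cite{Ashish} and the countable-sum reduction encapsulate. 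What each approach buys: the paper's argument is shorter modulo two standard facts (the countable-sum criterion and the splitting of essential finitely generated socles over $V$-rings), while yours is more self-contained, handles all cardinals uniformly in one stroke, and makes visible exactly where both hypotheses of $(2)$ are used.
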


\begin{proof}
$(2) \Rightarrow (1)$. Let $R$ be a right $V$-ring which is q.f.d. relative to every simple right $R$-module. Let $S$ be a simple right module
 and $M = \oplus _{i = 1}^ {\infty } S_i$, where $S_i=S$. We claim that $E(M)$ is semisimple. Since $E(M)$ is a sum of cyclic 
submodules, it is enough to see that cyclic submodules of $E(M)$ are semisimple. Let $xR$ be a cyclic submodule of $E(M)$ and so it has an essential socle. Because of our 
assumption its socle is finitely generated and so $xR$ is semisimple. Thus $E(M)$ is semisimple and hence it follows that $E(M)=M$ and consequently, $S$ is $\Sigma$-injective. This shows that $R$ is a right $\Sigma$-$V$ ring.

$(1) \Rightarrow (2)$.  This is Lemma 1 in \cite{Ashish} and it follows from simple observation that if a right $R$-module $M$ is $\Sigma$-injective, then $R$ is right $q.f.d.$ relative to $M$.
\end{proof}

As a consequence, we have the following.

\begin{corollary}
For a right $\Sigma$-$V$ ring, the following are equivalent:
\begin{enumerate}
\item $R$ is right noetherian.
\item No cyclic right $R$-module contains an infinite direct sum of non-isomorphic simple modules.
\item $R$ is right q.f.d. 
\end{enumerate}
\end{corollary}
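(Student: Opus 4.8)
The plan is to prove the implications $(1)\Rightarrow(3)\Rightarrow(2)\Rightarrow(1)$. The first two are routine and require nothing special: if $R$ is right noetherian then every cyclic right module is noetherian, hence of finite uniform dimension, so $R$ is right q.f.d., giving $(1)\Rightarrow(3)$; and $(3)\Rightarrow(2)$ is immediate, since an infinite direct sum of pairwise non-isomorphic simple submodules is in particular an infinite direct sum of nonzero submodules, so condition $(2)$ is a special case of q.f.d. The whole content therefore lies in $(2)\Rightarrow(1)$.

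For $(2)\Rightarrow(1)$ I would first upgrade condition $(2)$ to the stronger-looking statement that \emph{no cyclic right $R$-module contains an infinite direct sum of simple submodules}. This is the one place where the $\Sigma$-$V$ hypothesis is used: by Lemma~\ref{qfd} the ring $R$ is q.f.d. relative to every simple module, so no cyclic module contains an infinite direct sum of isomorphic copies of a single simple. If some cyclic $C$ contained an infinite direct sum $\bigoplus_i T_i$ of simples, a pigeonhole dichotomy finishes the reduction: either one isomorphism type recurs infinitely often, contradicting q.f.d.\ relative to that simple, or infinitely many pairwise non-isomorphic types occur, and choosing one summand from each produces an infinite direct sum of pairwise non-isomorphic simples inside $C$, contradicting $(2)$.

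The heart of the matter is then to show that this property forces $R$ to be right noetherian, and here I would use only that $R$ is a right $V$-ring, so that $\rad M=0$ for every module $M$ (whence every nonzero module has a maximal submodule) and every simple module is injective. Suppose $R$ is not right noetherian and fix a strictly ascending chain $F_1\subsetneq F_2\subsetneq\cdots$ of right ideals with $F=\bigcup_n F_n$. For each $n$, since $F_{n+1}/F_n\neq 0$ has zero radical it has a maximal submodule $D_n/F_n$, yielding a simple quotient $p_n\colon F_{n+1}\twoheadrightarrow W_n:=F_{n+1}/D_n$; as $W_n$ is injective, $p_n$ extends to $\pi_n\colon R\to W_n$. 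Put $\Pi=(\pi_n)_n\colon R\to\prod_n W_n$ and $K=\ker\Pi$. One verifies that $\pi_n(F_m)=0$ whenever $m\le n$ (because $F_m\subseteq F_n\subseteq D_n$), so every $\Pi(x)$ with $x\in F$ has finite support and $\Pi(F)\subseteq\bigoplus_n W_n$ is semisimple; moreover $\Pi(F_{m+1})$ has nonzero $m$-th coordinate while $\Pi(F_m)$ does not, so the images $\Pi(F_m)$ form a strictly increasing chain. Hence $\Pi(F)$ is a semisimple module of infinite length, i.e.\ an infinite direct sum of simple modules, realized as the submodule $(F+K)/K$ of the cyclic module $R/K$. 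This contradicts the reduction of the previous paragraph, so $R$ must be right noetherian.

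The step I expect to be the main obstacle is exactly this last construction: converting a strictly ascending \emph{chain} of right ideals into an infinite \emph{direct sum} of simple submodules of a single cyclic module. The key device is to push each simple quotient map defined on $F_{n+1}$ out to all of $R$ by injectivity of simples, and the delicate point is the bookkeeping that $\pi_n$ annihilates $F_m$ for $m\le n$: this is what keeps the supports finite and the partial images genuinely independent, so that $\Pi(F)$ is an infinite-dimensional \emph{semisimple} module inside a cyclic module rather than merely a large image in the product $\prod_n W_n$.
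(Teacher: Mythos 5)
Your proof is correct, but the comparison with the paper is asymmetric: the paper supplies no argument at all for this corollary, presenting it as an immediate consequence of Lemma \ref{qfd}. Your handling of $(1)\Rightarrow(3)\Rightarrow(2)$, and your pigeonhole upgrade of $(2)$ to ``no cyclic right $R$-module contains an infinite direct sum of simple submodules'' (equivalently, every cyclic module has finitely generated socle), is exactly the intended use of Lemma \ref{qfd}, so in that respect you follow the paper's route. The genuinely substantive content you add is the implication $(2)\Rightarrow(1)$: that a right $V$-ring whose cyclic modules have finitely generated socle must be right noetherian, a fact the paper treats as folklore and omits. Your construction proves it correctly, by the classical Kurshan/Faith-type technique for converting an ascending chain into a direct sum inside a cyclic module: each $F_{n+1}/F_n$ has a maximal submodule because modules over a $V$-ring have zero radical; the simple quotients $p_n\colon F_{n+1}\twoheadrightarrow W_n$ extend to $\pi_n\colon R\to W_n$ by injectivity of simples; the bookkeeping $\pi_n(F_m)=0$ for $m\le n$ forces $\Pi(F)$ into the semisimple module $\bigoplus_n W_n$; and the strictness $\Pi(F_m)\subsetneq\Pi(F_{m+1})$ (witnessed by the nonzero $m$-th coordinate $\pi_m(F_{m+1})=W_m$) makes $\Pi(F)\subseteq R/K$ a semisimple module of infinite length, hence an infinite direct sum of simples inside a cyclic module, contradicting the reduction. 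Every step checks out, so your write-up can be read as the complete proof the paper leaves to the reader.
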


Recall that a module $M$ is called {\it directly-finite $($or dedekind-finite$)$}, if it is not isomorphic to any of its proper direct summand. 

\begin{proposition}
If $R$ is a right $\Sigma$-$V$ ring, then every finitely generated right $R$-module is directly-finite.
\end{proposition}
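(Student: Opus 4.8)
The plan is to argue by contradiction, using the summand description of direct-finiteness to manufacture, inside a finitely generated module, a submodule isomorphic to $S^{(\omega)}$ for a suitable simple module $S$; the $\Sigma$-$V$ hypothesis will then make this impossible. Suppose $M$ is finitely generated but not directly-finite. Then $M$ is isomorphic to a proper direct summand of itself, so there is an internal decomposition $M = A_1 \oplus B_1$ with $A_1 \cong M$ and $B_1 = N \neq 0$. Being a direct summand of a finitely generated module, $N$ is itself finitely generated.

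The first key step is to upgrade the abstract relation $M \cong M \oplus N$ into an honest infinite \emph{internal} direct sum inside $M$. This is the step I expect to be the main obstacle, because the naive Eilenberg-swindle isomorphism $M \cong M \oplus N^{(\omega)}$ is false in general; the resolution is to iterate the decomposition internally rather than abstractly. Since $A_1 \cong M$, I can transport the decomposition of $M$ through this isomorphism to split $A_1 = A_2 \oplus B_2$ with $A_2 \cong M$ and $B_2 \cong N$, giving $M = A_2 \oplus B_2 \oplus B_1$. Repeating, I obtain for every $k$ an internal direct sum $B_1 \oplus \cdots \oplus B_k \subseteq M$ with each $B_i \cong N$. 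As independence of a family of submodules is a finitary condition, the whole family $\{B_i\}_{i \ge 1}$ is independent, and therefore $\bigoplus_{i \ge 1} B_i \cong N^{(\omega)}$ sits inside $M$ as a submodule.

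Next I would cut $N$ down to a simple module. Since $N$ is finitely generated and nonzero it has a maximal submodule, hence a simple quotient $N \twoheadrightarrow S$; let $B_i' \subseteq B_i$ be the submodule corresponding to the kernel, so $B_i/B_i' \cong S$. Setting $K = \bigoplus_i B_i' \subseteq M$, the image of $\bigoplus_i B_i$ in the finitely generated module $\overline{M} = M/K$ is isomorphic to $\bigoplus_i (B_i/B_i') \cong S^{(\omega)}$. Thus $\overline{M}$ is finitely generated and contains a submodule isomorphic to $S^{(\omega)}$.

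Finally I would invoke the hypothesis. Since $R$ is a right $\Sigma$-$V$ ring, the simple right module $S$ is $\Sigma$-injective, so $S^{(\omega)}$ is injective. An injective submodule is always a direct summand, so $S^{(\omega)}$ is a direct summand of $\overline{M}$; but a direct summand of a finitely generated module is finitely generated, whereas $S^{(\omega)}$, being an infinite direct sum of nonzero modules, is not. This contradiction shows that no finitely generated $M$ can fail to be directly-finite. It is worth noting that the $\Sigma$-$V$ hypothesis enters only through the injectivity of $S^{(\omega)}$: this is exactly the step that fails for a bare $V$-ring, and for the standard non-directly-finite ring $\End_k(V)$ with $V$ infinite-dimensional, consistent with the fact that such rings are not $\Sigma$-$V$.
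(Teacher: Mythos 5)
Your proof is correct and takes essentially the same route as the paper's: iterate the splitting internally to obtain an independent family of copies of $N$ inside $M$, pass to a common simple quotient $S$ via maximal submodules, and use $\Sigma$-injectivity of $S^{(\omega)}$ to force it to split off in the finitely generated quotient $M/K$, a contradiction. The only difference is that you make explicit some details the paper leaves implicit (the internal iteration avoiding the Eilenberg swindle, and the finitary character of independence).
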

\begin{Proof}
Let $M$ be a finitely generated module over $R$ where $R$ is a right $\Sigma$-$V$ ring. Assume to the contrary that $M$ is not directly-finite. Then $M = M_1 \oplus A_1$, $M_1 = M_2 \oplus A_2,$ and so on, where for each $n,$ $M_n \cong M,$ and
 $A_1 \cong A_2 \cong A_3 \cong \cdots$. Then we can find, in each $A_n,$ a maximal submodule $B_n$ such
that $A_n / B_n$ are all isomorphic to the same simple module. Then, by assumption, $\oplus _n A_ n / B_ n$
 is injective and hence it splits in $M / (\oplus _ n B_n)$, a contradiction, since
$M / (\oplus _ n B_n)$ is finitely generated. Therefore $M$ is directly-finite.
\end{Proof}

We do not know if the converse of the above result is true or not in case $R$ is a right $V$-ring. We ask

\bigskip

\noindent {\bf Question 1.} Let $R$ be a right $V$-ring such that each finitely generated right $R$-module is directly-finite. Must $R$ be a right $\Sigma$-$V$ ring?   \\

We give below an example of a right $V$-ring $R$ whose cyclic right modules are directly-finite but $R$ is not a right $\Sigma$-$V$ ring. 

\begin{example}\label{2.5}
 {\rm Let $F$ be a field and $V$, a vector space of infinite dimension over $F$. Let $S=\End_F(V)$,
 $J$ be the socle of $S_S$ and $R = F + J$. It may be checked that thing $R$ is a right $V$-ring but not a left $V$-ring. Since this ring is unit regular (see \cite [Example 5.15]{Goo79}) and not a left $V$-ring, it is clear that $R$ cannot be a right $\Sigma$-$V$ ring because if it was a right $\Sigma$-$V$ ring then being a von Neumann regular it would be a left $\Sigma$-$V$ ring and hence a left $V$-ring. Now we claim that every
 cyclic right $R$-module  is directly finite. Let $R/I$, where $I$ is a right ideal of $R$, be a cyclic right module. 
 If $ I \nsubseteq J$, then $I +J = R$, so $R/I$ is semisimple.
  Since it is also cyclic, it is directly finite. Now assume $I \subseteq J$. Note that $\End_R(R/I) = S/I$, where $S = \{r \in  R \mid  rI \subseteq I \}$ is the
idealizer of $I$. Suppose $f, g \in  S/I$ with $fg = 1.$ Then $f = s + I$ and $g = t + I$ for some
$s, t \in  S$ with $ st - 1 \in  I.$  Now $(s + J)(t + J) = 1 + J$ in $R/J = F \centerdot 1$ (because $I \subseteq J$), so
there exists $\alpha \neq 0 \in  F$  and $a, b \in  J$ such that $s = \alpha + a$ and $ t = \alpha ^{ -1 } + b.$ 
Since we may replace $f$ and $g$ by  $\alpha ^{-1}f$ and $\alpha g,$ there is no loss of generality in assuming that $\alpha = 1.$ So,
we now have $s = 1 + a$ and $t = 1 + b.$
Note that $s, t \in S$ implies $a, b \in  S.$ Also, $st - 1 \in I$ implies $a + b + ab \in  I.$
Choose a right ideal $I''$ of $R$ such that $J = I \oplus I''$, and write $a = a'+a''$ and $b = b' +b''$
for some $a', b' \in  I$ and $a'', b'' \in  I''$. Note that because $a', b' \in  I \subseteq S, $ we have $a'', b''  \in  S.$
Since $s + I = 1 + a'' + I$ and $t + I = 1 + b'' + I,$ we may replace $a$ and $b$ by $a''$ and $b''$.
Thus, there is no loss of generality in assuming that $a, b \in  I''.$
At this point, $a+b+ab \in I \cap  I'' = 0,$ which implies $st = 1.$ Since $R $ is unit-regular and
hence directly finite, $ts = 1.$ Therefore $gf = 1,$ proving that $S/I$ is directly finite, whence
the module $R/I$ is directly finite.}
\end{example}

The Jacobson radical rad$(M)$ of a module $M$ is defined as the intersection of all maximal submodules of $M$. It is not difficult to see 
that a right $R$-module $M$ has zero Jacobson radical if and only if $M$ can be embedded in a product of simple right modules, i.e., $M$ 
is cogenerated by the class of simple modules.  Following \cite{paper}, recall that
 a nonzero module is called iso-simple if it is isomorphic to all of its nonzero submodules. Simple modules and principal right ideal domains
as right module  are obvious examples of iso-simpe right modules. 

\begin{theorem} For a ring $R$ the following are equivalent:\\
{\rm (1)} Every right $R$-module can be embedded in a product of right iso-simple modules.\\
{\rm (2)} Every right $R$-module has zero Jacobson radical.\\
{\rm (3)} $R$ is a right $V$-ring.  
   \end{theorem}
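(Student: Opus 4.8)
The plan is to prove the cycle of implications $(3) \Rightarrow (2) \Rightarrow (1) \Rightarrow (3)$, which lets me lean on the classical Villamayor characterization of $V$-rings for the easy steps and reserve the real work for $(1)\Rightarrow(3)$.

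For $(3) \Rightarrow (2)$ I would run the standard argument. Given a right $V$-ring $R$, an arbitrary module $M$, and a nonzero $x \in M$, the cyclic module $xR$ has a maximal submodule $N$ not containing $x$ (otherwise $xR \subseteq N$, contradicting maximality). The quotient $xR/N$ is simple, hence injective, so the canonical surjection $xR \to xR/N$ extends to a homomorphism $g \colon M \to xR/N$ with $g(x) \neq 0$; then $\ker g$ is a maximal submodule of $M$ avoiding $x$ (since $M/\ker g$ is a nonzero submodule of the simple module $xR/N$). As $x$ was arbitrary this gives $\rad(M) = 0$, and by the remark preceding the theorem this is exactly the statement that $M$ embeds in a product of simple modules. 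The implication $(2) \Rightarrow (1)$ is then immediate: every simple module is trivially iso-simple, so ``embeds in a product of simple modules'' is a special case of ``embeds in a product of iso-simple modules.''

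The heart of the matter is $(1) \Rightarrow (3)$, and this is where I expect the main obstacle. Fix a simple module $S$; the goal is to show $S$ is injective, i.e. $S = E(S)$. By $(1)$ there is an embedding $\iota \colon E(S) \hookrightarrow \prod_i U_i$ with each $U_i$ iso-simple. Since $\iota$ is injective and $S \neq 0$, some coordinate projection $\phi = \pi_j \circ \iota$ is nonzero on $S$; as $S$ is simple, $\phi|_S$ is injective, so $U_j$ contains the nonzero submodule $\phi(S) \cong S$. The crucial use of the hypothesis occurs here: an iso-simple module containing a simple submodule must itself be simple, since it is isomorphic to that submodule; hence $U_j \cong S$. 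Finally, $\phi|_S$ injective means $\ker\phi \cap S = 0$, and because $S$ is essential in $E(S)$ this forces $\ker\phi = 0$. Thus $E(S)$ embeds in the simple module $U_j$ and is therefore simple, whence $E(S) = S$ and $S$ is injective.

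The delicate point to get right is precisely that iso-simplicity upgrades ``$U_j$ contains a copy of $S$'' to ``$U_j$ is isomorphic to $S$''; this is what lets the essentiality argument collapse $E(S)$ onto $S$. Without it an iso-simple coordinate could be a genuinely large module (for instance a principal right ideal domain regarded as a module over itself), the projection $\phi$ would still be injective, but $E(S)$ would only embed in a large iso-simple module rather than a simple one, and no contradiction with $E(S) \neq S$ would arise. So the whole weight of the new direction rests on verifying that the chosen coordinate is forced to be simple.
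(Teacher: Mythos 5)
Your proof is correct, but it takes a genuinely different route through the nontrivial implication than the paper does. You close the cycle $(3)\Rightarrow(2)\Rightarrow(1)\Rightarrow(3)$ and put all the weight on $(1)\Rightarrow(3)$: embed $E(S)$ in a product of iso-simple modules, choose a coordinate projection $\phi$ that is nonzero (hence injective) on the simple module $S$, invoke iso-simplicity to force the coordinate $U_j\cong\phi(S)\cong S$ to be simple, and then use essentiality of $S$ in $E(S)$ to get $\ker\phi=0$, so that $E(S)$ embeds in a simple module and must equal $S$. The paper instead does the hard work in $(1)\Rightarrow(2)$: it shows that under (1) every iso-simple module $I$ has zero Jacobson radical, arguing by contradiction that if $\rad(I)\neq 0$ one may pick a maximal submodule $M$ of $\rad(I)$ (this step uses the nontrivial fact that iso-simple modules are noetherian, drawn from the Facchini--Nazemian paper) and then prove that every homomorphism from $I/M$ to any iso-simple module annihilates the nonzero simple submodule $\rad(I)/M$, so $I/M$ cannot embed in a product of iso-simple modules; the paper then takes $(2)\Rightarrow(1)$ as clear and cites Lam's book for the classical equivalence $(2)\Leftrightarrow(3)$. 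Your argument buys self-containedness: it needs only injective hulls and the single observation that an iso-simple module with a simple submodule is itself simple, avoiding both the noetherianness of iso-simple modules and the external citation (you re-prove $(3)\Rightarrow(2)$ by the standard extension-of-the-quotient-map argument). The paper's argument, in exchange, stays entirely at the level of Jacobson radicals, never touching injective hulls, and isolates a by-product of independent interest: under hypothesis (1), every iso-simple module has zero radical.
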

    \begin{Proof}
  $ (1) \Rightarrow (2)$. It is enough to show that iso-simple right modules have zero Jacobson radical. Let $I$ be an iso-simple right module such that rad$(I)$ is nonzero. Thus $I$ is not simple. Note that $I$ and so rad$(I)$ is  noetherian. We can take a maximal submodule of rad$(I)$, say $M$. 
  To get to a contradiction, we show that for every iso-simple right module $T$ and every $h : I/M \longrightarrow T$, rad$(I) /M \leq $ ker$(h)$. Let $T$ be an iso-simple module and $h : I/M \longrightarrow T$ be a nonzero map. If $T$ is simple, then ker$(h)$ is 
  maximal in $I/M$ and so  rad$(I) /M \leq $ ker$(h)$. Now assume that $T$ is not simple. In this case since $I/M$ has nonzero socle,
  ker$(h)$ is nonzero. Let $K \geq M$ be a submodule of $I$ such that 
   ker$(h) = K/M$. If rad$(I) /M$ is not a submodule of $K/M$, the restriction $h$ to $($rad$(I) +  K )/M$, implies a nonzero map from  
  $($rad$(I) +  K )/M$ to $T$ whose kernel is $K/M$. Thus $T \cong $ rad$(I)/ ($rad $(I) \cap K)$.
  Since $M \leq  ($rad$(I) \cap K)$ and $M$ is maximal in rad$(M)$. We have $M = ($rad$(I) \cap K)$. This shows
  $T$ is simple which is a contradiction. Thus rad$(I) /M$ is a submodule of ker$(h)$.\\
$ (2) \Rightarrow (1)$ is clear and $ (2) \Leftrightarrow  (3)$ follows by \cite [ Theorem 3.75 ] {lam2}.
   \end{Proof}

\begin{remark} \label{Cozzens}
{\rm It is not difficult to see that if an iso-simple module is injective, then it is a simple module. Thus if all iso-simple right $R$-modules are 
injective, then $R$ is right $V$-ring. The converse is not true, in general. If a ring $R$ is a right $V$-ring, then the iso-simple right $R$-modules are not necessarily injective. For example,   if $R$ is a principal right
 ideal $V$-domain such that $R$ is not a division ring then $R_R$ is not injective. One can find examples of such domains constructed by Cozzens in \cite {Cozzens}.}
\end{remark}

In \cite{paper}, a module $M$ is called {\em iso-artinian} if for every descending chain $ M_1 \geq M_2 \geq \cdots$ of 
submodules of $M$, there exists an $n\ge1$ such that $M_n \cong M_i$ for all $i \geq n$. Every iso-artinian module contains an essential 
submodules which is a direct sum of iso-simple modules (see \cite [Theorem 2.9] {paper}).
 
\begin{proposition}
For a right $\Sigma$-$V$ ring $R$, the following are equivalent:\\
{\rm (1)} Every iso-simple right $R$-module is injective.\\
{\rm (2)} Every iso-artinian right $R$-module is injective and semisimple.
\end{proposition}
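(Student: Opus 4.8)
The plan is to treat the two implications separately, with $(2)\Rightarrow(1)$ being immediate and $(1)\Rightarrow(2)$ carrying all the content. For $(2)\Rightarrow(1)$ I would first note that every iso-simple module is iso-artinian: in a descending chain of submodules of an iso-simple module $I$, every nonzero term is isomorphic to $I$ and hence to every other nonzero term, while once a term is zero all subsequent terms vanish, so the chain stabilizes up to isomorphism. Granting $(2)$, each iso-simple module is then iso-artinian and hence injective, which is exactly $(1)$.

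For $(1)\Rightarrow(2)$, let $M$ be an iso-artinian right $R$-module. By \cite[Theorem 2.9]{paper}, $M$ contains an essential submodule $N=\bigoplus_{i}I_i$ that is a direct sum of iso-simple modules. Under hypothesis $(1)$ each $I_i$ is injective, hence simple by Remark \ref{Cozzens}; thus $N$ is semisimple, so $N\subseteq\soc(M)$, and since $N$ is essential in $M$ the socle $\soc(M)$ is essential in $M$ as well.

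The heart of the argument, and the step I expect to be the main obstacle, is to prove that $\soc(M)$ is injective. Writing $\soc(M)=\bigoplus_{\lambda\in\Lambda}H_\lambda$ as the internal direct sum of its homogeneous components, with $H_\lambda=T_\lambda^{(\alpha_\lambda)}$ for pairwise non-isomorphic simples $T_\lambda$, I claim that, since $M$ is iso-artinian, the index set $\Lambda$ must be finite. Indeed, were $\Lambda$ infinite, I would pick distinct $\lambda_1,\lambda_2,\dots$ in $\Lambda$ and set $M_n=\bigoplus_{k\ge n}H_{\lambda_k}$; this is a strictly descending chain of submodules of $M$ in which $M_n$ contains copies of $T_{\lambda_n}$ but $M_{n+1}$ contains none, so $M_n\not\cong M_{n+1}$ by the uniqueness of the homogeneous decomposition of a semisimple module. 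No tail of the chain is then constant up to isomorphism, contradicting that $M$ is iso-artinian; hence $\Lambda$ is finite.

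With $\Lambda$ finite I would finish using that $R$ is a right $\Sigma$-$V$ ring: each simple $T_\lambda$ is $\Sigma$-injective, so each component $H_\lambda=T_\lambda^{(\alpha_\lambda)}$ is injective, and a finite direct sum of injective modules is injective; therefore $\soc(M)$ is injective. Being injective and essential in $M$, it is a direct summand of $M$, and essentiality then forces $M=\soc(M)$. Thus $M$ is semisimple and injective, which is $(2)$.
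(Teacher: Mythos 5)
Your proof is correct and takes essentially the same route as the paper: an essential sum of iso-simples from \cite[Theorem 2.9]{paper}, which hypothesis (1) forces to be semisimple, followed by injectivity of the socle and hence $M=\soc(M)$. The paper's version is just terser; your chain argument showing that iso-artinian forces finitely many homogeneous components (so that $\Sigma$-injectivity of each simple yields injectivity of the finite sum) is exactly the detail the paper leaves implicit in the phrase ``essential socle which must be iso-artinian.''
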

   \begin{Proof}
   $ (1) \Rightarrow (2)$. Following  \cite [Theorem 2.9] {paper},
   Every iso-artinian module contains an essential 
submodules which is a direct sum of iso-simple modules.  
 So if $M$ is an iso-artinian module, it has an essential socle which must be iso-artinian. 
 Thus soc$(M)$ is a direct sum of homogeneous semisimple modules an so it is injective.
 Then $M = $soc$(M)$ is a semisimple injective module.  \\
   $ (2) \Rightarrow (1)$ is clear.     
   \end{Proof}
   
   \begin{example}
   {\rm Note that a ring whose iso-simple right modules are injective need not be a right $\Sigma$-$V$ ring.
     Take the ring in Example \ref{2.5}. Then $R$ is a von Neumann regular right $V$-ring which is not a right $\Sigma$-$V$ ring. Also, it may checked that $R$ is both left and right semi-artinian. So, each noetherian right $R$-module is injective and a finite direct sum of simple modules.   }
   \end{example}

\bigskip

\section{Kaplansky's Conjecture and a question of Fisher}

\noindent Kaplansky conjectured that a prime von Neumann regular is primitive \cite{Kap}. Fisher and Snider verified the conjecture for countable rings \cite{FS} and Goodearl verified the conjecture for right self-injective rings \cite{G}. Recently, Ara and Bell have shown that a prime countable-dimensional von Neumann regular algebra over any field is primitive \cite{AB}. However, in general, Kaplansky's conjecture is not true and a counter-example was constructed by Domanov \cite{Domanov}. Abrams, Bell and Rangaswamy have constructed an example of a prime non-primitive von Neumann regular Leavitt path algebra in \cite{ABR}. Inspired by Kaplansky, Fisher raised the question whether a prime right $V$-ring is right primitive \cite{Fisher}. This question is still open. In our next theorem we will show that for a right $\Sigma$-$V$ ring, the notions of prime and primitive are equivalent.       

Before presenting the proof of this theorem, we recall basics of the structure theory of von Neumann regular right self-injective rings as our proof depends heavily on this structure theory. The theory of types was first proposed by Murray and von Neumann \cite{MV} but it was developed as a tool for classification by Kaplansky in \cite{Kap1} for a certain class of rings of operators which are called Baer rings. Since von Neumann regular right self-injective rings are Baer rings, Kaplansky's theory applies to von Neumann regular right self-injective rings. A von Neumann regular right self-injective ring is said to be of type $I$ provided it contains a faithful abelian idempotent and it is said to be of type $II$ provided $R$ contains a faithful directly-finite idempotent but no nonzero abelian idempotents. A von Neumann regular right self-injective ring is of type $III$ if it contains no nonzero directly-finite idempotents. Moreover, a von Neumann regular right self-injective ring is said to be of type $I_{f}$ (resp., $I_{\infty}$) if $R$ is of type $I$ and is directly-finite (resp., purely-infinite) and it is said to be of type $II_{f}$ (resp., $II_{\infty}$) if $R$ is of type $II$ and is directly-finite (resp., purely-infinite). It is well known that any von Neumann regular right self-injective ring $R$ can be decomposed as a product $R=R_1 \times R_2 \times R_3 \times R_4 \times R_5$ where $R_1$ is of type $I_f$, $R_2$ is of type $I_\infty$, $R_3$ is of type $II_f$, $R_4$ is of type $II_\infty$, and $R_5$ is of type $III$ (see \cite{Goo79}, pp. 111-115).

\begin{theorem} \label{equivalent}
For a right $\Sigma$-$V$-ring $R$, the following are equivalent:
\begin{enumerate}
\item $R$ is right primitive.
\item $R$ is prime. 
\item $R$ is left primitive.
\end{enumerate}
\end{theorem}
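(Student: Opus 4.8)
The implications $(1)\Rightarrow(2)$ and $(3)\Rightarrow(2)$ are immediate, since a one-sided primitive ring is always prime. So the entire content lies in proving $(2)\Rightarrow(1)$ together with $(2)\Rightarrow(3)$, and I would aim to produce a faithful simple right module and a faithful simple left module out of the primeness. Two reductions come for free: being a right $V$-ring, $R$ is semiprimitive (the zero ideal is an intersection of maximal right ideals, so $J(R)=0$) and it is right nonsingular, a standard property of right $V$-rings; the latter is what will let the structure theory enter.

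The clean case is $\soc(R_R)\neq 0$. A prime ring is semiprime, so its right and left socles coincide and are nonzero; choosing a minimal right ideal $eR$ with $e^2=e$, the ring $eRe$ is a division ring, $eR$ is a faithful simple right module, and $Re$ is a faithful simple left module. Hence $R$ is simultaneously right and left primitive, disposing of $(2)\Rightarrow(1)$ and $(2)\Rightarrow(3)$ at once. I expect it is precisely this classical socle symmetry that accounts for the appearance of \emph{left} primitivity in a statement whose hypothesis is only \emph{right} $\Sigma$-$V$.

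The genuinely hard case is $\soc(R_R)=0$, and here I would pass to $Q:=E(R_R)$, the injective hull of $R_R$. By nonsingularity $Q$ is the maximal right quotient ring of $R$, hence von Neumann regular and right self-injective. Primeness transfers: since $R\leq_e Q_R$, every nonzero ideal $A$ of $Q$ meets $R$ in a nonzero ideal $A\cap R$ of $R$, and $(A\cap R)(B\cap R)\subseteq AB$, so $AB=0$ forces one factor to vanish. A prime ring is indecomposable, so the type decomposition $Q=Q_1\times\cdots\times Q_5$ collapses and $Q$ is of a single Murray--von Neumann type. The plan is then to use the $\Sigma$-$V$ hypothesis in the q.f.d.\ form of Lemma~\ref{qfd} to force $Q$ to be of type $I$ — either a matrix ring $M_n(D)$ in the type $I_f$ case or an endomorphism ring $\End_D(V)$ in the type $I_\infty$ case — both of which are left and right primitive, and then to descend a faithful simple $Q$-module to each side of $R$.

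The main obstacle is exactly this last stage. On the structural side, I expect the q.f.d.\ condition — no cyclic module contains a copy of $S^{(\omega)}$ for a simple $S$ — to be the lever that excludes the ``continuous'' types $II_f$, $II_\infty$, $III$, since the absence of abelian or directly finite idempotents there should manufacture an infinite homogeneous direct sum inside a cyclic module, in tension with q.f.d.\ and with the direct-finiteness of finitely generated $R$-modules established above. On the transfer side, descending a faithful simple $Q$-module to a faithful simple $R$-module is delicate, because restriction of scalars need not preserve simplicity and $R$ need not be a $Q$-submodule of $Q$; controlling the intersection of $\soc(Q_Q)$ with $R$ and invoking the idempotency of right ideals to keep the descended module simple is where I anticipate the real difficulty, and where the structure theory of regular self-injective rings is used most heavily.
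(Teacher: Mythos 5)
Your easy implications and your nonzero-socle case are correct, and your setup for the main case --- passing to $Q=Q^{r}_{\max}(R)$, transferring primeness, and invoking the type decomposition --- is exactly the paper's starting point (note that the paper derives right nonsingularity from \emph{primeness} via Baccella's lemma \cite[Lemma 4.3]{On flat factor}, rather than as a property of all right $V$-rings). Past that point, however, your proposal is an outline whose two essential steps are missing, and the second one is headed in a direction that does not work. First, the exclusion of the bad types is only asserted (``should manufacture an infinite homogeneous direct sum''); you never build one. The paper's route is different and more economical: a right $\Sigma$-$V$ ring is directly finite \cite{Ashish}, this passes to $Q$, and direct finiteness alone eliminates the purely infinite types $I_\infty$, $II_\infty$ and $III$, leaving only $I_f$ and $II_f$ for a prime $Q$. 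Type $II_f$ is then excluded by an explicit construction: iterated halving of idempotents (\cite[Proposition 10.28]{Goo79}), realization of the isomorphisms $f_iQ\cong f_jQ$ via \cite[Proposition 21.20]{Lam1}, and intersection with $R$ (using $R\leq_e Q_R$) to produce an independent family of cyclic submodules of $R$ whose isomorphic simple tops all sit inside the single cyclic module $R/M$, contradicting Lemma~\ref{qfd}. Because you never bring in direct finiteness, your analysis must still confront type $I_\infty$, i.e.\ $Q\cong\End_D(V)$ with $\dim_D V$ infinite --- and that is precisely the case in which your endgame is hopeless.

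Second, the endgame itself: your plan is to produce a faithful simple $Q$-module and descend it to $R$ by restriction of scalars, and you yourself flag that you cannot control simplicity or faithfulness under this descent. That admission is the gap; nothing in the proposal closes it. The paper never descends modules at all. Once only type $I_f$ survives, $Q$ is a matrix ring over a prime abelian regular ring, and a prime abelian regular ring is a division ring (its idempotents are central, primeness excludes nontrivial central idempotents, and a regular ring with no nontrivial idempotents is a division ring); hence $Q$ is simple artinian, so $R$ is a right order in a simple artinian ring, and the paper concludes that $R$ is in fact a \emph{simple} right Goldie ring. Left and right primitivity then come for free: in a simple ring every maximal right (or left) ideal has zero core, so $R/M$ is a faithful simple module on either side. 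The correct conclusion is thus structural (order-theoretic), not module-theoretic. As it stands, your proposal proves the theorem only when $\soc(R_R)\neq 0$; in the main case it correctly identifies where the difficulties lie but resolves neither of them.
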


\begin{Proof}
$(2) \Rightarrow (3)$. Let $R$ be a prime right $\Sigma$-$V$ ring. Following \cite [Lemma 4.3] {On flat factor}, $R$ is right nonsingular. Thus the maximal right quotient ring $Q=Q_{\max}^{r}(R)$ is a prime von Neumann regular right self-injective ring. We know that $R$ is directly-finite being a right $\Sigma$-$V$ ring and consequently, the ring $Q$ is also directly-finite. 

Now, by the type theory of von Neumann regular right self-injective rings, $Q$ is a direct product of rings of type $I_f$ and type $II_f$ but since $Q$ is a prime ring, it follows that either $Q$ is of type $I_{f}$ or $Q$ is of type $II_{f}$ (see \cite{Goo79}, Theorem 10.22). We claim that $Q$ must be of type $I_{f}$. 

Assume to the contrary that $Q$ is of type $II_{f}$. Then, by (\cite{Goo79}, Proposition 10.28), there exists an idempotent $e\in Q$ such that $Q_{Q}\cong 3(eQ)$. Therefore $Q=e_{1}Q\oplus e_{2}Q\oplus e_{3}Q$ where $e_{1}, e_{2}, e_{3}\in Q$ are nonzero orthogonal idempotents such that their sum is the identity of the ring $Q$ and $e_{i}Q\cong e_{j}Q\cong eQ$ for all $1\leq i,j\leq 3$. Therefore there exist nonzero cyclic submodules $C_{2i}\subseteq e_{i}Q\cap R,$ $i=1,2,$ such that $C_{21}\cong C_{22}$. 

Now, as $eQe=\End(eQ)$ is also of type $II_{f}$, there exist nonzero orthogonal idempotents $f_{1},f_{2},f_{3},f_{4}\in eQe$ such that $f_{i}(eQe)\cong $ $f_{j}(eQe)$ for all $1\leq i,j\leq 4$. Hence $f_{i}(eQe)\cong f_{j}(eQe)$ for all $i,j$. By (\cite{Lam1}, Proposition 21.20), there exist $a\in f_{i}(eQe)f_{j}$ and $b\in f_{j}(eQe)f_{i}$ such that $f_{i}=ab$ and $f_{j}=ba$. 

Define $\varphi: f_iQ\rightarrow f_jQ$ by $\varphi(f_iq)=bf_iq$ for each $q\in Q$. Clearly, $\varphi$ is an isomorphism. Thus, for all $i,j$, we have $f_{i}Q\cong f_{j}Q$. Furthermore, there exist nonzero cyclic submodules $C_{3i}\subseteq f_{i}Q\cap R,$ $i=1,2,3$ such that $C_{3i}\cong C_{3j}$ for all $1\leq i,j\leq 3$. 

Continuing in this fashion, we construct an independent family $\{C_{ij}$ : $i=2,3, \ldots;$ $1\leq j\leq i\}$ of nonzero cyclic submodules of $R$ such that $C_{ij}\cong C_{ik}$ for all $1\leq j,k\leq i$; $i=2,3, \ldots$. Therefore there exist maximal submodules $M_{ij}$ of $C_{ij},1\leq j\leq i$ $ ;i=2,3, \ldots$ such that $C_{ij}/M_{ij}\cong C_{ik}/M_{ik}$ for all $i,j,k.$ 

Setting $M=\oplus _{i,j}M_{ij}$, and $S_{i}=C_{i1}/M_{i1}$, we get that the cyclic right module $R/M$ contains an infinite direct sum of modules each isomorphic to $S_{i}$, a contradiction to Lemma \ref{qfd}. Therefore $Q$ is of type $I_{f}$. 

Hence $Q$ is a product of matrix rings over abelian regular rings but again since $Q$ is a prime ring, it turns out that $Q$ is a matrix ring over an abelian regular ring, say $S$ (see \cite{Goo79}, Theorem 10.24). This means that $S$ is a prime abelian regular ring. Since $S$ being a prime ring has no nontrivial central idempotent and each idempotent in an abelian regular ring is central, we conclude that $S$ is a von Neumann regular ring with no nontrivial idempotent and hence a division ring. Thus $Q$ is a simple artinian ring and consequently, $R$ is a simple right Goldie ring. Hence $R$ is right and left primitive.\\

$(1) \Rightarrow (2)$, $(3) \Rightarrow (1)$ are clear. 
\end{Proof}

In the above proof, we also observe that

\begin{corollary} \label{primitive}
If $R$ is a prime right $\Sigma$-$V$ ring, then $R$ is a simple right Goldie ring. 
\end{corollary}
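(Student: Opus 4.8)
The plan is to extract the conclusion from the proof of Theorem \ref{equivalent}, run under the sole hypothesis that $R$ is prime. First I would record that a prime right $\Sigma$-$V$ ring is right nonsingular (as cited in the proof of Theorem \ref{equivalent}), so that the maximal right quotient ring $Q=Q_{\max}^{r}(R)$ is a prime von Neumann regular right self-injective ring. Since $R$ is directly-finite (being a right $\Sigma$-$V$ ring), $Q$ is directly-finite as well, and the type decomposition of von Neumann regular right self-injective rings, together with primeness, leaves only two possibilities: $Q$ is of type $I_f$ or of type $II_f$.

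The heart of the argument, which I expect to be the main obstacle, is to exclude type $II_f$; this is exactly where the $\Sigma$-$V$ hypothesis, in the form of Lemma \ref{qfd}, is used. Assuming $Q$ were of type $II_f$, I would iterate the splitting procedure from the proof of Theorem \ref{equivalent}: repeatedly decomposing $Q$ through idempotents with $Q\cong 3(eQ)$ and passing to the corner ring $eQe$ (again of type $II_f$), I would produce, for each $i=2,3,\dots$, an independent family $\{C_{ij}:1\le j\le i\}$ of nonzero cyclic submodules of $R$ with $C_{ij}\cong C_{ik}$ for all $j,k$. Choosing maximal submodules $M_{ij}\subseteq C_{ij}$ so that the simple factors $C_{ij}/M_{ij}$ agree across each row, the cyclic module $R/\bigl(\bigoplus_{i,j}M_{ij}\bigr)$ then contains an infinite direct sum of copies of each simple $S_i=C_{i1}/M_{i1}$, contradicting the q.f.d. conclusion of Lemma \ref{qfd}. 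Hence $Q$ must be of type $I_f$, so $Q$ is a matrix ring over an abelian regular ring; primeness forces that base ring to have no nontrivial central idempotents, hence to be a division ring $D$, and therefore $Q\cong M_n(D)$ is simple artinian.

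It remains to pass from $Q$ back to $R$. Since $R$ is right nonsingular with $E(R_R)=Q=Q_{\max}^{r}(R)$ simple artinian, $R$ has finite right uniform dimension, so by the Goldie--Johnson theory $R$ is a prime right Goldie ring whose classical right quotient ring is the simple artinian ring $Q$. For the simplicity of $R$ I would again invoke the $V$-ring structure rather than Goldie theory alone: a nonzero proper ideal $I$ would be essential as a right ideal by primeness, so $R/I$ would be a nonzero singular cyclic module which, being a module over the right $V$-ring $R$, has zero radical; combining the finite right Goldie dimension of $R$ with the q.f.d. behaviour guaranteed by Lemma \ref{qfd} then rules out such a proper $I$, forcing $I=R$. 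Thus $R$ is a simple right Goldie ring. I expect the simplicity upgrade, not the Goldie conclusion, to be the delicate point, since prime Goldie alone does not force simplicity and the argument must genuinely use that every simple right $R$-module is $\Sigma$-injective.
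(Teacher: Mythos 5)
Up to the final step, your proposal is exactly the paper's argument: the paper states this corollary as an observation extracted from the proof of Theorem \ref{equivalent}, and your reduction (nonsingularity, passage to $Q=Q_{\max}^{r}(R)$, direct finiteness, exclusion of type $II_f$ via the q.f.d.\ property of Lemma \ref{qfd}, hence $Q$ simple artinian), together with the standard Johnson--Goldie bookkeeping showing $R$ is a prime right Goldie ring with classical quotient ring $Q$, is faithful to it.

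The genuine gap is in your simplicity step. You propose to rule out a nonzero proper ideal $I$ by noting that $R/I$ is a nonzero singular cyclic module with zero radical, and then ``combining the finite right Goldie dimension of $R$ with the q.f.d.\ behaviour'' --- but no contradiction actually follows from these facts. Your argument uses the two-sidedness of $I$ only to get essentiality, which every essential right ideal has; and nonzero singular cyclic modules with zero radical exist in abundance over simple right Goldie right $V$-rings themselves (over a Cozzens principal right ideal $V$-domain, $R/cR$ for any nonzero nonunit $c$ is such a module), so their existence cannot contradict anything. Even knowing $R/I$ were semisimple would not by itself be absurd. The missing idea is the one fact about $V$-rings the paper records in Lemma \ref{right primitive factor} but that you never invoke: every right ideal of a right $V$-ring is idempotent (\cite[Theorem 6.2]{JAT}). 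With it the step is immediate: since $R$ is prime right Goldie, a nonzero ideal $I$ contains a regular element $c$ (Goldie's theorem); then $cR=(cR)^2=cRcR$, so $c=c\bigl(\sum_i a_i c b_i\bigr)$ for some $a_i,b_i\in R$, and cancelling the regular element $c$ gives $1=\sum_i a_i c b_i\in RcR\subseteq I$, whence $I=R$. This (or something equivalent) is what the paper's ``consequently, $R$ is a simple right Goldie ring'' silently relies on; your proposed mechanism does not supply it.
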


In view of the Proposition \ref{equivalent}, and Corollary \ref{primitive} it is useful to know when a simple right Goldie ring is a right $\Sigma$-$V$ ring.

\begin{proposition}
Let $R$ be a simple right Goldie ring. Then the following are equivalent:\\
{\rm (1)} $R$ is a right $\Sigma$-$V$ ring.\\
{\rm (2)} Every singular simple right $R$-module is $\Sigma$-injective. \\
{\rm (3)} $R$ is a right $V$-ring and q.f.d. relative to singular simple modules.
\end{proposition}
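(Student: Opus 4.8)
The plan is to reduce everything to a structural dichotomy: a simple right Goldie ring is either simple Artinian or has no minimal right ideal at all. Since $R$ is simple it is in particular prime, hence semiprime right Goldie, and therefore right nonsingular. Over a right nonsingular ring a simple module $R/\mathfrak{m}$ is either \emph{singular} (when the maximal right ideal $\mathfrak{m}$ is essential) or \emph{projective} (when $\mathfrak{m}$ is a complement, i.e. a direct summand). Thus a nonsingular simple module would be a minimal right ideal of $R$; but the right socle is a nonzero two-sided ideal of the simple ring $R$, so its existence would force $\soc(R_R)=R$ and make $R$ simple Artinian. This is the pivot of the whole argument.

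First I would dispose of the simple Artinian case. There $R$ is semisimple, every right module is injective and hence $\Sigma$-injective, so (1) and (3) hold trivially, while (2) holds vacuously: a semisimple ring has no nonzero singular module at all, since its only essential right ideal is $R$ itself. Hence the three statements are simultaneously true.

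The remaining case, where $R$ is not simple Artinian, is the substantive one. By the dichotomy $R$ then has no minimal right ideal, so every maximal right ideal is essential and consequently \emph{every} simple right $R$-module is singular. This is the crucial reduction: the phrases ``every simple module'' and ``every singular simple module'' now coincide. Granting this, (1)$\Leftrightarrow$(2) is immediate, because $R$ is a right $\Sigma$-$V$ ring precisely when every simple, equivalently every singular simple, right module is $\Sigma$-injective; and (1)$\Leftrightarrow$(3) is exactly Lemma~\ref{qfd} once ``q.f.d. relative to every simple module'' is rewritten as ``q.f.d. relative to every singular simple module.''

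The step I expect to be the main obstacle---indeed essentially the only non-formal point---is establishing this dichotomy cleanly: one must verify that a nonsingular simple module over a right nonsingular ring is projective (arising from a complemented maximal right ideal), and that a minimal right ideal in a simple ring forces simple Artinianness via the socle being a nonzero ideal. Once the dichotomy is in place, both equivalences follow formally from Lemma~\ref{qfd} and the definition of a $\Sigma$-$V$ ring, with no further computation. It is worth noting that a naive cyclic chase $(1)\Rightarrow(2)\Rightarrow(3)\Rightarrow(1)$ does \emph{not} go through directly, since the implication to the $V$-ring property in (3) would require controlling the nonsingular simple modules as well; the dichotomy is precisely what removes that difficulty.
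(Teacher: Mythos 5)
Your proof is correct, but it takes a genuinely different route from the paper's. The paper makes no case distinction at all: for both $(2)\Rightarrow(1)$ and $(3)\Rightarrow(1)$ it invokes the theorem of Boyle and Goodearl \cite[Theorem 8]{goodearlboyle} that nonsingular quasi-injective modules over a prime right Goldie ring are injective; applied to $S^{(\alpha)}$ (semisimple, hence quasi-injective, and nonsingular) for a nonsingular simple module $S$, this shows that every nonsingular simple module over $R$ is automatically $\Sigma$-injective, so hypotheses (2) and (3), which only constrain the singular simples, upgrade to statements about all simples, and Lemma \ref{qfd} finishes. You instead prove a structural dichotomy: either $R$ is simple Artinian, where all three statements hold (with (2) vacuous since a semisimple ring has no nonzero singular modules), or $R$ has no minimal right ideal, in which case every simple right module is singular and (1), (2), (3) become literal restatements of one another via Lemma \ref{qfd}. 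Your two key steps are sound: a non-essential maximal right ideal is a direct summand by maximality, so a nonsingular simple module is isomorphic to a minimal right ideal; and a nonzero $\soc(R_R)$ is a two-sided ideal, hence equals $R$ by simplicity, forcing $R$ to be simple Artinian. What each approach buys: yours is more elementary (no injectivity theorem over Goldie rings, only socle and essentiality facts) and it actually shows the Goldie hypothesis is superfluous --- the equivalence holds for every simple ring. The paper's route proves the sharper local fact that nonsingular simple modules over any \emph{prime} right Goldie ring are $\Sigma$-injective, which is the form of the statement one would need to push the proposition beyond simple rings; your socle argument uses simplicity irreducibly, since in a merely prime ring a nonzero socle need not be all of $R$. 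One cosmetic remark: right nonsingularity of $R$ plays no real role in your dichotomy --- over any ring, a simple module $R/\mathfrak{m}$ is singular if and only if $\mathfrak{m}$ is essential, and otherwise it is isomorphic to a minimal right ideal --- so that sentence of your argument can be simplified.
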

\begin{Proof}
$(1) \Rightarrow (2)$ is clear. \\
 $(2) \Rightarrow (1)$ follows from this fact that nonsingular 
quasi injective right modules over prime right Goldie ring are injective, see for example \cite [Theorem 8] {goodearlboyle}. \\
$(1) \Rightarrow (3)$ follows from Lemma \ref{qfd}.  \\
$(3) \Rightarrow (1)$. In the view of Lemma \ref{qfd}, it is enough to show that $R$ is q.f.d. relative to nonsingular
simple right $R$-modules. As a nonsingular simple module over a prime right Goldie ring is $\Sigma$-injective, a cyclic
module cannot contain an infinite direct sum of a copy of a nonsingular simple module. Thus it follows by Lemma \ref{qfd} that $R$ is a right $\Sigma$-$V$ ring. 
\end{Proof}

In \cite{Ashish} it is shown that for a von Neumann regular ring, the notion of $\Sigma$-$V$ ring is left-right symmetric. In the next theorem, we extend this left-right symmetry of $\Sigma$-$V$ rings to the class of exchange rings. Crawley and J\'{o}nnson introduced the notion of exchange property for modules in \cite{CJ}. A right $R$-module $M$ is said to satisfy the exchange property if for every right $R$-module $A$ and any two direct sum decompositions $A=M^{\prime}\oplus N=\oplus_{i\in \mathcal I}A_{i}$ with $M^{\prime} \simeq M$, there exist submodules $B_i$ of $A_i$ such that $A=M^{\prime} \oplus (\oplus_{i \in \mathcal I}B_i)$. If this hold only for $|\mathcal I|<\infty$, then $M$ is said to satisfy the finite exchange property. A ring $R$ is called an exchange ring if the module $R_R$ (or $_RR$) satisfies the (finite) exchange property. One of the equivalent characterizations of an exchange ring $R$ is that for each element $a\in R$, there exists an idempotent $e\in R$ such that $e\in aR$ and $1-e \in (1-a)R$. The class of exchange rings includes, in particular, von Neumann regular rings.  
 
\begin{theorem}
Let $R$ be an exchange right $\Sigma$-$V$ ring. Then 
\begin{enumerate}
\item $R$ is von Neumann regular.
\item $R$ has bounded index of nilpotence.
\end{enumerate}
\end{theorem}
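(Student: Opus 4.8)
The plan is to strip the $\Sigma$-$V$ hypothesis down to two ring-theoretic consequences and feed them into the exchange machinery. Since a right $\Sigma$-$V$ ring is a right $V$-ring, the Theorem above (every right $R$-module has zero Jacobson radical) applied to $R_R$ gives $J(R)=0$, so $R$ is semiprimitive; and by Lemma~\ref{right primitive factor} every right ideal of $R$ is idempotent, i.e.\ $R$ is right weakly regular. Thus for part (1) it suffices to upgrade ``every right ideal is idempotent'' to ``every principal right ideal is a direct summand'' using the exchange property, and for part (2) it suffices, once regularity is in hand, to bound the degrees of the simple artinian prime factors supplied by Baccella \cite{Baccellaproc}.

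For part (1) I would route through the prime/primitive structure already established. By Lemma~\ref{right primitive factor} every primitive factor $R/P$ is again a right $\Sigma$-$V$ ring, and being primitive it is prime, so Corollary~\ref{primitive} shows $R/P$ is a simple right Goldie ring; moreover $R/P$ is an exchange ring with $J(R/P)=0$. The key local step is that such a ring is simple artinian: in an exchange ring with zero radical every nonzero right ideal contains a nonzero idempotent, so a uniform right ideal $U$ of $R/P$ (one exists by finite Goldie dimension) satisfies $U=e(R/P)$ for an idempotent $e$; since every nonzero submodule of the uniform module $e(R/P)$ again contains a nonzero idempotent and hence, by uniformity, equals $e(R/P)$, the module $e(R/P)$ is simple. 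Thus $R/P$ has nonzero socle, and being simple its socle is all of $R/P$, so $R/P\cong M_{n(P)}(D_P)$. The remaining, more delicate task is to descend from ``all prime factors are simple artinian'' to regularity of $R$ itself; here I would combine weak regularity with exchange directly, exploiting that for each $a$ one can produce an idempotent $e$ with $ae=a$ from the relation $a\in aRaR$ together with an exchange idempotent for a suitable $c\in RaR$ with $ac=a$, and then use the left--right symmetry of the exchange property to control the corresponding annihilator.

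For part (2), assuming regularity, Baccella's theorem \cite{Baccellaproc} gives $R/P\cong M_{n(P)}(D_P)$ for every prime $P$. Since $R$ is semiprime, an element is nilpotent of index $\le n$ exactly when its image is nilpotent of index $\le n$ in every $R/P$, so the index of nilpotence of $R$ equals $\sup_P n(P)$, and bounded index is equivalent to $\sup_P n(P)<\infty$. To force this bound I would argue by contradiction: if the degrees $n(P)$ were unbounded, I would manufacture inside a single cyclic right module an infinite direct sum of copies of one simple module, contradicting the q.f.d.\ property of Lemma~\ref{qfd}. Concretely, from matrix blocks of growing size one extracts independent families of pairwise isomorphic cyclic submodules with isomorphic simple tops and assembles them into one quotient $R/M$, exactly as in the type~$II_f$ construction in the proof of Theorem~\ref{equivalent}.

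The main obstacle I anticipate is precisely this uniform bound in part (2). The q.f.d.\ hypothesis of Lemma~\ref{qfd} forbids infinitely many copies of a \emph{single} simple module inside a cyclic module, but unbounded degrees $n(P)$ can a priori be distributed across \emph{different}, non-isomorphic, simple modules, and q.f.d.\ relative to each simple separately does not immediately rule this out. The crux is therefore to arrange that the assembled family has a common simple top, i.e.\ to convert ``unboundedly many finite occurrences'' into ``infinitely many occurrences of one simple''; this is the point that the homogeneous type~$II_f$ setting of Theorem~\ref{equivalent} handled automatically but which needs genuine extra input here. A secondary difficulty is the descent step in part (1), where exchange must be used essentially: weak regularity alone is insufficient, as the non-local simple Goldie $V$-domains of Cozzens (Remark~\ref{Cozzens}), which fail to be exchange and fail to be regular, illustrate.
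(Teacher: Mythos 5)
Your reduction of part (1) to ``every right primitive factor of $R$ is simple artinian'' is sound, and your local argument for that statement is essentially correct and genuinely different from the paper's: you use that in an exchange ring with zero radical every nonzero right ideal contains a nonzero idempotent, pick a uniform right ideal $U$ of $R/P$ (available since $R/P$ is simple right Goldie by Corollary~\ref{primitive}), and conclude that the socle is nonzero, hence all of the simple ring $R/P$. (Minor slip: the idempotent gives $e(R/P)\subseteq U$, not $U=e(R/P)$, but the argument survives.) The paper instead notes that $R/P$ is right nonsingular, quotes \cite[Corollary 6]{Ashish} to get bounded index of nilpotence, and applies \cite[Theorem 3]{yu} to get artinian. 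The genuine gap in your part (1) is the descent step: your sketch (``produce an idempotent $e$ with $ae=a$ from $a\in aRaR$ together with an exchange idempotent\dots control the annihilator'') is not an argument, and there is no known elementary manipulation of a single element that upgrades weak regularity plus exchange to von Neumann regularity. The missing ingredient is exactly Baccella's theorem \cite{Baccellaproc}: a right $V$-ring all of whose right primitive factor rings are artinian is von Neumann regular (see also \cite[Corollary 1.3]{FS2}). With your artinian-factors step in hand, one citation of this result finishes part (1); without it, your proof stops short.

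For part (2) the obstacle you anticipate is real, and your proposed repair would indeed fail where you say it fails: q.f.d.\ relative to each simple module (Lemma~\ref{qfd}) only forbids infinitely many copies of \emph{one} simple module in a cyclic module, and unbounded matrix degrees $n(P)$ across non-isomorphic simple factors cannot be assembled into a homogeneous family by the type~$II_f$ trick of Theorem~\ref{equivalent}, since nothing forces a common simple top. The paper sidesteps this entirely: once $R$ is von Neumann regular it is right nonsingular, and \cite[Corollary 6]{Ashish} states that a right nonsingular right $\Sigma$-$V$ ring has bounded index of nilpotence; that single citation is the whole proof of part (2). So your proposal, as written, proves neither part: part (1) is missing Baccella's $V$-ring theorem and part (2) is missing the nonsingular/bounded-index result, both of which are the load-bearing citations in the paper's proof.
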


\begin{proof}
\begin{enumerate}
\item Let $R$ be an exchange right $\Sigma$-$V$ ring. Let $R/I$ be a right primitive factor ring of $R$. Then clearly, $R/I$ is a right nonsingular right $\Sigma$-$V$ ring. It is known that a right nonsingular right $\Sigma$-$V$ ring has bounded index of nilpotence (see \cite [Corollary 6] {Ashish}), therefore $R/I$ has bounded index of nilpotent. Now, by \cite[Theorem 3]{yu}, $R/I$ is artinian. Since a ring all of whose right primitive factors are artinian is a right $\Sigma$-$V$ ring if and only if it is von Neumann regular \cite{Baccellaproc}, it follows that $R$ is von Neumann regular (see also \cite[Corollary 1.3]{FS2}).

\item Since von Neumann regular rings are right non-singular and right nonsingular right $\Sigma$-$V$ rings have bounded index of nilpotence \cite [Corollary 6] {Ashish}, we have that $R$ has bounded index of nilpotence.
\end{enumerate}
\end{proof}

\begin{remark}
{\rm  Part (2) of the above theorem gives a positive answer to Problem 15 of \cite{Ashish}.}
\end{remark}

In the view of the above theorem, we would like to ask

\bigskip

\noindent {\bf Question 2.} Is an exchange right $V$-ring necessarily von Neumann regular?

\bigskip

It seems highly unlikely that the answer to the above question would be ``yes" but to the best of our knowledge there are no known examples of exchange right $V$-ring that are not von Neumann regular.

\bigskip

\noindent As a consequence of the above theorem, we have

\begin{corollary}
An exchange ring is a right $\Sigma$-$V$ ring if and only if it is a left $\Sigma$-$V$ ring.
\end{corollary}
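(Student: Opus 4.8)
The plan is to derive this corollary directly from the preceding theorem, exploiting the fact that von Neumann regularity is a left-right symmetric property together with the result of Baccella quoted in the introduction. First I would observe that the statement to be proved is symmetric in ``left'' and ``right,'' so it suffices to prove one implication, say that an exchange right $\Sigma$-$V$ ring is a left $\Sigma$-$V$ ring; the reverse implication then follows by the dual argument (noting that the exchange condition is itself left-right symmetric, since $R_R$ satisfies the finite exchange property if and only if $_RR$ does).

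So suppose $R$ is an exchange right $\Sigma$-$V$ ring. By part (1) of the theorem immediately above, $R$ is von Neumann regular. Now I invoke the characterization of Baccella stated in the introduction: for a von Neumann regular ring $R$, being a right $\Sigma$-$V$ ring is equivalent to $R/P$ being simple artinian for every prime ideal $P$ of $R$, and this condition is patently left-right symmetric (it refers only to two-sided prime ideals $P$ and to the simple artinian property of $R/P$, neither of which distinguishes sides). Hence a von Neumann regular right $\Sigma$-$V$ ring is automatically a left $\Sigma$-$V$ ring, and we are done.

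I expect the main conceptual point to be the reduction to von Neumann regularity via the theorem, after which the symmetry is essentially immediate from Baccella's criterion. The only subtlety to handle with care is making explicit that ``exchange'' is a two-sided notion, so that once one implication is established, applying it to the opposite ring $R^{\mathrm{op}}$ yields the converse without extra work. No genuine obstacle is anticipated here; the corollary is a packaging of the theorem together with the already-quoted symmetric structure theorem for von Neumann regular $\Sigma$-$V$ rings.
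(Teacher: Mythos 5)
Your proposal is correct and follows essentially the same route as the paper: apply part (1) of the preceding theorem to get von Neumann regularity, then invoke the left-right symmetry of the $\Sigma$-$V$ property for von Neumann regular rings (Baccella's criterion via prime factors). The paper's proof is just a one-line version of this; your additional care about the symmetry of the exchange condition (needed to run the argument in both directions) is a detail the paper leaves implicit but is handled correctly in your write-up.
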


\begin{proof}
This follows from the fact that for a von Neumann regular ring, the property of being a $\Sigma$-$V$ ring is left-right symmetric. 
\end{proof}

Note that an exchange $V$-ring is not left-right symmetric. Example \ref{2.5} is an exchange right $V$-ring which is not a left $V$-ring.

\bigskip

\noindent The following characterizes prime right semi-hereditary right $\Sigma$-$V$ rings.
 
\begin{proposition} \label{primecase}
For a prime right $\Sigma$-$V$ ring $R$, the following are equivalent:\\
{\rm (1)} $R$ is right semi-hereditary.\\
{\rm (2)} $R$ is a matrix ring over a simple principal right ideal right $V$-domain.  
\end{proposition}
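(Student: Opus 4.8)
The plan is to handle the two implications separately, with $(2)\Rightarrow(1)$ being routine and $(1)\Rightarrow(2)$ carrying all the content. For $(2)\Rightarrow(1)$, suppose $R\cong M_n(T)$ with $T$ a principal right ideal right $V$-domain. Every right ideal of $T$ has the form $aT$ and, $T$ being a domain, $aT\cong T_T$ is free, hence projective; so $T$ is right hereditary and in particular right semi-hereditary. Since right semi-hereditariness is a Morita invariant, $R=M_n(T)$ is right semi-hereditary as well. Thus this direction reduces to the Morita invariance of (semi-)hereditariness.

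For $(1)\Rightarrow(2)$, I would first invoke Corollary \ref{primitive}: a prime right $\Sigma$-$V$ ring is a simple right Goldie ring, so by Goldie's theorem its classical right quotient ring is $Q=M_n(D)$ for a division ring $D$, where $n$ is the right uniform dimension of $R_R$. The strategy is then to realize $R$ as a matrix ring over the endomorphism ring of a uniform right ideal. Fixing a uniform right ideal $U\le R_R$, the module $U\otimes_R Q\cong UQ$ is a minimal (uniform) right ideal of $Q$, and restriction gives an embedding $\End(U_R)\hookrightarrow \End_Q(UQ)=D$; hence $T:=\End(U_R)$ is a domain. I would then argue, using that $Q$ is simple (so all minimal right ideals of $Q$ are isomorphic) together with right semi-hereditariness (which makes finitely generated uniform right ideals projective and lets one upgrade the resulting sub-isomorphisms to isomorphisms), that $R_R$ splits as a direct sum of $n$ mutually isomorphic copies of $U$. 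This yields $R\cong\End(R_R)\cong\End(U^{\,n})\cong M_n(T)$.

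Having obtained $R\cong M_n(T)$ with $T$ a domain, I would transfer properties along the Morita equivalence between $R$ and $T$: since $R$ is simple, right $\Sigma$-$V$ (hence a right $V$-ring), right Goldie and right semi-hereditary, so is $T$. Thus $T$ is a simple right $V$-domain that is right Ore (so right uniform) and right semi-hereditary, and by Lemma \ref{qfd} it is q.f.d. relative to every simple right module. It remains to prove that $T$ is a \emph{principal} right ideal domain. Here I would use the fact, available for right $V$-rings, that every right ideal of $T$ is idempotent, combined with right semi-hereditariness: first deduce via the q.f.d. characterisation that $T$ is right Noetherian, so that right semi-hereditary plus right Noetherian gives right hereditary, and then conclude that a simple right hereditary Noetherian right $V$-domain is a principal right ideal domain (a Michler--Villamayor type structure result for Noetherian $V$-rings).

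The main obstacle is precisely this last passage from ``semi-hereditary $V$-domain'' to ``principal right ideal domain.'' Semi-hereditariness alone only yields that finitely generated right ideals are projective, which over a domain need not force them to be principal, as commutative Dedekind domains show; the $V$-ring hypothesis — equivalently, that all right ideals are idempotent — must therefore be used essentially, presumably through the q.f.d./Noetherian reduction afforded by the $\Sigma$-$V$ property. A second delicate point is establishing the decomposition $R\cong M_n(T)$ itself, namely that $R_R$ splits into $n$ isomorphic uniform projectives; I expect this is exactly where right semi-hereditariness, rather than mere Goldie-ness, is genuinely needed.
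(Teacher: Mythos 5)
Your $(2)\Rightarrow(1)$ is correct and matches the paper's (which simply calls it clear). In $(1)\Rightarrow(2)$, however, both steps that you yourself flag as delicate are genuine gaps, and the sketches you give for them cannot be completed as written. Consider first the decomposition $R_R\cong U^{n}$. The only hypotheses your sketch invokes are simplicity of $Q$ and right semi-hereditariness of $R$; but $Q=M_n(D)$ is simple artinian for \emph{every} prime right Goldie ring, and there exist prime, right noetherian, hereditary rings --- e.g. the classical tiled order $\Lambda=\{A\in M_2(\Z_{(p)})\ :\ a_{21}\in p\Z_{(p)}\}$, whose quotient ring is $M_2(\Q)$ --- for which the right regular module decomposes into two \emph{non-isomorphic} uniform right ideals and which are not full matrix rings over any ring (Krull--Schmidt applies here, since the uniform summands have local endomorphism rings). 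So subisomorphism of uniform right ideals, which is all that primeness and the common quotient ring provide, cannot be upgraded to isomorphism from your hypotheses; simplicity of $R$ itself must enter. That is precisely the content of Hart's theorem, which the paper invokes at this point: a simple ring with a projective uniform right ideal is Morita equivalent to a simple domain $D$. Note moreover that Hart yields only a Morita equivalence; to convert it into an honest matrix decomposition $R\cong M_n(D)$ one must already know that finitely generated projective $D$-modules are free, i.e. that $D$ is a principal right ideal domain \emph{first}. Your proposed order of argument --- matrix decomposition first, principality last --- is therefore backwards relative to the way the proof can actually be run.

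The second gap is the closing step: you appeal to an unnamed ``Michler--Villamayor type structure result'' asserting that a simple right hereditary noetherian right $V$-domain is a principal right ideal domain. No proof or precise citation is offered, and this is the crux of the proposition; note that the Weyl algebra $A_1(k)$, $\mathrm{char}\,k=0$, is a simple hereditary noetherian domain that is not a PRID, so whatever fills this step must use the $V$-property (or further structure) essentially, and your sketch supplies no mechanism for that. The paper closes this step by citing Cohn's theorems on free ideal rings. Your route to right noetherianness is also unsupported: Lemma \ref{qfd} gives q.f.d. only \emph{relative to simple modules}, which, by the Corollary following it, is weaker than the right q.f.d. needed to conclude noetherianness, and you never show $T$ (or $R$) is right q.f.d. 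The paper instead argues that, $R$ being simple right Goldie by Corollary \ref{primitive}, $E(R_R)$ is the semisimple classical quotient ring and hence $\Sigma$-injective as a right $R$-module, whence Faith's proposition (a right semi-hereditary ring with $E(R_R)$ $\Sigma$-injective is right noetherian) applies. In short, your outline correctly isolates the hard points of the implication but proves none of them; the paper resolves them, in order, by Faith's proposition, Hart's theorem, and Cohn's theorems.
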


\begin{Proof}
 $(1)\Rightarrow(2)$. Let $R$ be a prime right semi-hereditary right $\Sigma$-$V$ ring. Then $E(R_R)$ is a semisimple ring and so it is $\Sigma$-injective as a right $R$-module. 
Thus by \cite [Proposition 12] {faith}, $R$ is right noetherian. On the other hand, $R$ is a simple ring with a 
 projective uniform right ideal and so by \cite{Hart} $R$ is Morita equivalent to a simple domain, say $D$. 
 Since $D$ is a right noetherian right hereditary domain, by \cite [Theorem 2.3] {Cohn}, $R$ is a principal right ideal domain. Now
 by \cite[Theorem 3 ] {Cohnfir}, it follows that $R$ is a matrix ring over a simple principal right ideal right $V$-domain.\\
$(2)\Leftarrow(1)$ is clear.
\end{Proof}

There exists an example of simple $\Sigma$-$V$-domain which is neither right nor left semi-hereditary. 
 
\begin{example}
{\rm By \cite [Theorem 5] {Boylehereditary}, if $R$ is a hereditary left noetherian ring, then $R$ is a right $QI$-ring if and only if it is a right 
noetherian right $V$-ring. 
Let $F$ be a universal differential field of characteristic zero with respect to two commuting derivations $\delta _1$ and $\delta _2$ (Kolchin \cite [Theorem, p. 771] {14}),
 and let $R = F[\theta _1, \theta _2]$ be the ring of linear differential
operators over $F$.  
We recall that the elements of $R$ are noncommutative
polynomials in the indeterminates $\theta _1, \theta _2$, subject to the relations
$ \theta _1 \theta _2 = \theta _2 \theta _1$ and $ \theta _ i \alpha = \alpha \theta _i + \delta _i \alpha $ for all $ \alpha \in  F$.
Then as it is shown in \cite  [Page 48 ]{goodearlboyle}, $R$ is a noetherian $V$-ring which is not a right $QI$ ring.
 Also, by \cite [Theorem 1] {CozzensJ}, $R$ is 
 a simple domain (and thus, a simple $\Sigma$-$V$ domain) which is not a right $QI$-ring and so it is not right hereditary. Note that a left semi-hereditary left noetherian ring 
 must be right semi-hereditary, see for example \cite{L. Small}. Therefore $R$ cannot be a left hereditary ring. }
\end{example}

\begin{proposition}
For a hereditary left noetherian ring $R$ the following are equivalent:\\
{\rm (1)} $R$ is a right $QI$-ring.\\
{\rm (2)} $R$ is a right noetherian right $V$-ring.\\
{\rm (3)} $R$ is a right $\Sigma$-$V$ ring.\\
{\rm (4)} $R$ is isomorphic to a finite direct sum of matrix ring over simple principal right ideal right $V$-domains.
\end{proposition}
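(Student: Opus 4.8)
The plan is to establish the cycle $(2)\Rightarrow(3)\Rightarrow(4)\Rightarrow(2)$ and to read off $(1)\Leftrightarrow(2)$ from Boyle's theorem \cite{Boylehereditary}, which for a hereditary left noetherian ring asserts precisely that being a right $QI$-ring is equivalent to being a right noetherian right $V$-ring. This is the result quoted in the example preceding the proposition, so I would simply invoke it; together with the cycle it gives all four equivalences.

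The two routine implications I would dispatch first. For $(2)\Rightarrow(3)$ I would use the Bass--Papp/Faith--Walker characterization: over a right noetherian ring every injective right module is $\Sigma$-injective. Since a right $V$-ring makes every simple right module injective, every simple right module is then $\Sigma$-injective, so $R$ is a right $\Sigma$-$V$ ring. For $(4)\Rightarrow(2)$ I would note that a simple principal right ideal domain is right noetherian and right hereditary, and that both ``right noetherian'' and ``right $V$-ring'' are Morita invariant and stable under finite direct products; hence a finite direct sum of matrix rings over simple principal right ideal right $V$-domains is again a right noetherian right $V$-ring.

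The heart of the argument is $(3)\Rightarrow(4)$, which I would carry out by decomposing $R$ into simple rings. Since a right $\Sigma$-$V$ ring is a right $V$-ring, $J(R)=0$; as the prime radical is contained in $J(R)$, the ring $R$ is semiprime. Being left noetherian, $R$ has ACC on two-sided ideals and therefore only finitely many minimal prime ideals $P_1,\dots,P_k$, whose intersection equals the prime radical, i.e.\ $\bigcap_i P_i=0$. Each $R/P_i$ is a factor ring of a right $\Sigma$-$V$ ring, hence itself a \emph{prime} right $\Sigma$-$V$ ring, so by Corollary \ref{primitive} it is a simple right Goldie ring. In particular each $P_i$ is a maximal two-sided ideal, the $P_i$ are pairwise comaximal, and the Chinese Remainder Theorem gives a ring isomorphism $R\cong\prod_{i=1}^{k}R/P_i$. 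Under this decomposition each $R/P_i$ is a ring direct summand of the hereditary ring $R$, hence right semi-hereditary, while it is also a prime right $\Sigma$-$V$ ring; Proposition \ref{primecase} then identifies $R/P_i$ with a matrix ring over a simple principal right ideal right $V$-domain, and assembling the factors yields $(4)$.

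The main obstacle is this decomposition step, and specifically the passage from \emph{prime} to \emph{simple}: the minimal primes must be shown to be maximal ideals before comaximality and the Chinese Remainder Theorem can be applied. It is here that the $\Sigma$-$V$ hypothesis, through Corollary \ref{primitive}, does the essential work, since the analogous statement fails for general prime $V$-rings and is exactly the phenomenon separating $V$-rings from $\Sigma$-$V$ rings. Secondary care is needed to confirm that semiprimeness and the finiteness of the set of minimal primes really follow from the one-sided (left noetherian) hypothesis together with $J(R)=0$, and that right semi-hereditariness descends to the ring direct factors so that Proposition \ref{primecase} is applicable.
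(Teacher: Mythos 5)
Your proof is correct, and its overall skeleton (Boyle's theorem \cite[Theorem 5]{Boylehereditary} for $(1)\Leftrightarrow(2)$, Corollary \ref{primitive} to make prime factors simple, Proposition \ref{primecase} to identify them) matches the paper's; the genuine difference lies in how the finite product decomposition in $(3)\Rightarrow(4)$ is obtained. The paper invokes Levy's theorem \cite[Theorem 6.1]{Levy}: a semiprime left Goldie ring is an irredundant subdirect sum of finitely many prime left Goldie rings $R/P_1,\dots,R/P_n$; simplicity of each $R/P_i$ (via Corollary \ref{primitive}) together with irredundancy then forces $\bigcap_{i\neq j}P_i+P_j=R$, upgrading the subdirect sum to a direct product. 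You instead argue from scratch: $J(R)=0$ gives semiprimeness, left noetherian gives ACC on two-sided ideals and hence finitely many minimal primes with zero intersection, simplicity of the $R/P_i$ gives pairwise comaximality of maximal ideals, and the Chinese Remainder Theorem yields the product. The two routes are parallel --- in both, the essential input is that prime right $\Sigma$-$V$ factor rings are simple right Goldie, which is exactly where the $\Sigma$-$V$ hypothesis enters --- but yours is self-contained (it avoids the citation to Levy) at the cost of verifying the standard facts about minimal primes under ACC, while the paper's is shorter by outsourcing precisely that finiteness statement. A further minor difference: you close the cycle with $(2)\Rightarrow(3)$ via Bass--Papp (right noetherian plus right $V$ implies right $\Sigma$-$V$), whereas the paper declares $(1)\Rightarrow(3)$ clear; both are legitimate since $(1)\Leftrightarrow(2)$ is already supplied by \cite{Boylehereditary}. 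Your attention to the descent of (semi-)hereditariness to ring direct factors, which the paper passes over silently, is also warranted and correctly handled.
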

\begin{Proof}
$(1) \Rightarrow (3)$ is clear and  $(1) \Leftrightarrow (2)$ is \cite [Theorem 5] {Boylehereditary}. \\
$(3) \Rightarrow (4)$. $R$ is semiprime left Goldie and so by \cite [Theorem 6.1] {Levy}, $R$ is
 an irredundant subdirect sum of a finite number of prime left Goldie rings $R_1, \cdots , R_n$. Thus there exist
  ideals $P_1, \cdots , P_n$ 
 of $R$ such that  $\cap _{i = 1} ^{n} P_i = 0$ and for each $1 \leq j \leq n$, $\cap _{i \neq j}  P_i \neq 0$ and for 
 each $i$, $R/P_i \cong R_i$. As each $R_i$ is a  prime right $\Sigma$-$V$ ring, it is a simple ring.
 Since  $\cap _{i \neq j}  P_i \nsubseteq P_j$, we have $\cap _{i \neq j}  P_i + P_j = R$. Thus $R$ is 
 isomorphic to $R_1 \times \cdots  \times R_n$. Each $R_i$ is a prime right hereditary right $\Sigma$-$V$ ring and so
  by Lemma \ref{primecase}, it is of the form of a matrix ring over a simple principal right ideal right $V$-domain.\\
  $(4) \Rightarrow (2)$ is straightforward. 
\end{Proof}

\bigskip

\noindent {{   \bf {Acknowledgments} }}. The authors would like to express their thanks to Professor Ken Goodearl for suggesting  Example \ref{2.5}.
The first and second authors would  like to thank  Iran National Science Foundation (INSF) and Yazd University for their support
 through the grant no. 94015014.
\bigskip

\bigskip

\bigskip

\end{document}